%%%%%%%%%%%%%%%%%%          gtlatex.tem       %%%%%%%%%%%%%%%%%%
%
%  Template for articles written in LaTeX for publication in
%  G&T, G&TM and A&GT.  This template must be used with latex2e.  
%  If you use BiBTeX then you can collect the bibliography style 
%  file  gtart.bst  from the same directory as this file.  Full
%  instructions for using gtpart.cls are given in gtpartdoc.pdf.  
%
%
\documentclass{gtpart}     % Basic GT/GTM/AGT style
%
%   Uncomment one of the next three lines to obtain a full "mock-up"
%   of a published article:
%   A&GT:  \agtart     G&T:  \gtart   G&TM:  \gtmonart
%
%   NOTE:  Please do not place your article in a public place (eg
%          on the arXiv) in "mock-up" form unless it has been accepted
%          for publication in the relevant journal.
%
%\gtart  
%\agtart
%\gtmonart
%
%   Add necessary packages here.  Note that amsthm, amssymb and
%   amsmath are already loaded, so there is no need to add any 
%   of these.  Examples:
%
\usepackage[all, cmtip]{xy}
\usepackage{graphicx, subfig}
\usepackage{epsfig}
\usepackage{amscd}
\usepackage[mathscr]{eucal}
\usepackage{amssymb}
\usepackage{amsxtra}
\usepackage{amsmath}
\usepackage{latexsym}
\usepackage[all]{xy}
\usepackage{enumerate}
\usepackage{mathrsfs}
\usepackage{color}
%\usepackage{pinlabel}  %%% the recommended graphics+labelling package
%
%
%%% Start of metadata
%

\title[Gromov width of surfaces]{Gromov width and uniruling for orientable Lagrangian surfaces}

%  First author
%
\author{Fran\c{c}ois Charette}
\givenname{}
\surname{}
\address{Fran\c{c}ois Charette, Department of Mathematics, ETH-Z\"{u}rich,
  R\"{a}mistrasse 101, 8092 Z\"{u}rich, Switzerland}
\email{francois.charette@math.ethz.ch}
\urladdr{}

%  Second author (uncomment if necessary)
%
%\author{}
%\givenname{}
%\surname{}
%\address{}
%\email{}
%\urladdr{}
%
%  (Add a similar block for other authors)
%
%   Title and author both have running head options:
%
%   \title[Running head title]{Main title}
%   \author[Running head author]{Author}
%
% give a separate \keyword and \subject line for each keyword/phrase or 
% subject class eg \keyword{framed link} \subject{primary}{msc2000}{57M25}

\keyword{Lagrangian surfaces}
\keyword{Gromov width}
\keyword{Uniruling}
\subject{primary}{msc2000}{53Dxx}
\subject{secondary}{msc2000}{53D12}

%
%  fill in the reference and password if your article is stored at the
%  arXiv eg \arxivreference{math.GT/0512347}  \arxivpassword{5spud}

\arxivreference{http://arxiv.org/abs/1401.1953}
\arxivpassword{bsxbu}

%
%  Leave the following items blank
%
\volumenumber{}
\issuenumber{}
\publicationyear{}
\papernumber{}
\startpage{}
\endpage{}
\doi{}
\MR{}
\Zbl{}
\received{}
\revised{}
\accepted{}
\published{}
\publishedonline{}
\proposed{}
\seconded{}
\corresponding{}
\editor{}
\version{}

%%% End of metadata
%
%%% Start of user-defined macros %%%
%
%   Theorem-type environments.  There are two predefined styles :
%
%   \theoremstyle{plain} : for theorems, corollaries etc with heading 
%   bold and left justified, optional note bracketed in roman type
%   and statement in slanted type.  This is the default style.
%
%   \theoremstyle{definition} : (alias remark)  for definitions, remarks 
%   etc with heading bold and left justified, optional note as before but
%   with statement in roman type.
%   
%   Some sample  \newtheorem's  (delete these unless you need
%   them and insert your own):
%
\newtheorem{thm}{Theorem}[section]    % Standard theorem environment
\newtheorem{lem}[thm]{Lemma}          % Lemma environment with numbering 
%                                     % consecutive to theorems

\theoremstyle{definition}
    % Definition environment with 
%                                     % numbering consecutive to theorems
%
%   Type your macros (\newcommand's etc) below.
%
\newtheorem{mainthm}{Theorem}
\setcounter{mainthm}{0}

\newtheorem{prop}[thm]{Proposition}
\theoremstyle{remark}
\newtheorem{rem}[thm]{Remark}
\newtheorem*{remnonum}{Remark}

\newcommand{\id}{\textnormal{id}}

\newcommand{\Mm}{{\mathcal{M}}}

\newcommand{\Crit}{\textnormal{Crit\/}}
\newcommand{\jreg}{\mathcal{J}_\text{reg}}

\newcommand{\ind}{{\textnormal{ind}}}

%%% End of user-defined macros %%%

\begin{document}
\begin{abstract}    % type your abstract below
We prove a conjecture of Barraud-Cornea for orientable Lagrangian surfaces. As a corollary, we obtain that displaceable Lagrangian 2--tori have finite Gromov width.  In order to do so, we adapt the pearl complex of Biran-Cornea to the non-monotone situation based on index restrictions for holomorphic discs.
\end{abstract}

\maketitle

%%%%%%%%%%%%%%%%%%%%   Start of main body of article
% !TEX root = Serep.tex

\section{Introduction}
The present paper is a continuation of results of the author \cite{Cha:refinement}, where it was shown that closed \textit{monotone} (see \S \ref{sec:prelim} for the precise defintion) Lagrangians in tame symplectic manifolds satisfy a general form of uniruling by holomorphic curves. Here we remove the monotonicity condition in the case of orientable surfaces.  Recent examples given by Rizell \cite{Riz:nonuniruled} show that this restriction is not of a technical nature; the results simply do not hold in higher dimensions without other constraints, or even for non-orientable surfaces. In this note we focus our attention mostly on the connection between displacement energy and Gromov width.

Recall that the relative Gromov width of a Lagrangian $L$ is 
$$w_G(L):= \sup_{\mathcal{B}(M, L, r)} \{ \pi r^2 \},$$
where $\mathcal{B}(M, L, r)$ is the set of all symplectic embeddings of $B^{2n}(r)$, the ball of radius $r$ in $\C^n$, such that $B^{2n}(r) \cap \R^n$ is mapped to $L$.

The displacement energy of a Lagrangian is the minimal energy required by a Hamiltonian isotopy to displace it, $E(L):= \inf \{ E(\phi) \; | \; \phi \in \text{Ham}(M, \omega), \; \phi(L) \cap L = \emptyset \}$, where $E(\phi)$ is the energy of a Hamiltonian isotopy; we set $E(L) = \infty$ in case it is not displaceable.

If $L$ is a closed, displaceable and orientable Lagrangian surface, an easy argument shows that it is diffeomorphic to a torus.  Our main result is the following
\begin{mainthm} \label{thm:main}
Let $L$ be a  Lagrangian 2--torus in a tame symplectic manifold, then $w_G(L) \leq 2 E(L)$.
\end{mainthm}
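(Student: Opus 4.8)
The plan is to deduce the width bound from a \emph{uniruling} statement for $L$---the technical heart of the paper, established via the adapted pearl complex---and then to convert the existence of a low-area holomorphic disc through a prescribed point into the geometric inequality by means of the monotonicity lemma. First I may assume $L$ is displaceable, for otherwise $E(L)=\infty$ and there is nothing to prove; this is consistent with the torus hypothesis, since a closed orientable displaceable surface is a torus.

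The key intermediate result I would establish is the following uniruling statement: for a generic $\omega$--tame almost complex structure $J$ and a point $p$ in a dense subset of $L$, there exists a non-constant $J$--holomorphic disc $u\colon (D,\partial D)\to (M,L)$ whose boundary passes through $p$ and whose symplectic area satisfies $\omega(u)\le E(L)$. This is where the construction of the pearl complex in the non-monotone setting enters: one sets up the complex with a point constraint, uses the displacing Hamiltonian isotopy to bound the area of the contributing discs by the displacement energy, and---crucially---uses the index restrictions available for discs with boundary on an orientable surface to rule out the bubbling that would otherwise obstruct both the definition of the differential and the area bound, finally showing the relevant count is nonzero so that the disc genuinely exists. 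I expect this step to be the main obstacle: compactness and transversality for non-monotone discs, together with the non-vanishing of the count, is precisely what the two-dimensionality and orientability are needed for, as Rizell's examples show the conclusion fails otherwise.

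Granting the uniruling, the geometric conclusion follows by a now-standard argument. Given any symplectic embedding $\psi\colon B^{2n}(r)\to M$ with $\psi(B^{2n}(r)\cap \R^n)\subset L$, I would choose $J$ so that on the image ball it equals the pushforward $\psi_* J_0$ of the standard structure, and extend it tamely to $M$; a generic small perturbation away from the ball preserves the uniruling conclusion. Applying the uniruling to the center point $p=\psi(0)\in L$ produces a disc $u$ with $\omega(u)\le E(L)$ passing through $p$. Transporting back via $\psi^{-1}$, the part of $u$ lying in the image ball becomes a $J_0$--holomorphic curve through the origin, with boundary on the Lagrangian plane $\R^n$ and on $\partial B^{2n}(r)$.

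Finally I would invoke the monotonicity lemma. Doubling this curve by Schwarz reflection across $\R^n$ (reflection is anti-holomorphic, hence sends $J_0$--holomorphic curves to $J_0$--holomorphic curves) produces a closed $J_0$--holomorphic curve through the origin, contained in $B^{2n}(r)$, of twice the area; the closed monotonicity lemma forces this doubled area to be at least $\pi r^2$, so the portion of $u$ inside the ball has area at least $\tfrac12 \pi r^2$. Combining with the uniruling bound,
\[
\tfrac12 \pi r^2 \;\le\; \omega(u) \;\le\; E(L),
\]
whence $\pi r^2\le 2E(L)$. Since $\psi$ was arbitrary, taking the supremum over all admissible embeddings yields $w_G(L)\le 2E(L)$.
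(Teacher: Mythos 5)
Your proposal is correct and follows essentially the same route as the paper: the uniruling theorem (proved via the pearl complex adapted to orientable surfaces, with the index restriction $\mu\geq 2$ replacing monotonicity) applied to a Hamiltonian displacing $L$ with energy $E(L)+\epsilon$, followed by the Barraud--Cornea reflection/monotonicity argument giving $\pi r^2/2\leq E(L)+\epsilon$. The only caveat is that the paper's uniruling statement also allows the curve through the marked point to be a $J$-holomorphic \emph{sphere} (arising from bubbling), a case your disc-only formulation omits but which is harmless, since the closed monotonicity lemma then gives the even stronger bound $\pi r^2\leq E(L)+\epsilon$.
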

The proof is based on Theorem \ref{thm:uniruling}, a uniruling result which in turns proves a conjecture of Barraud-Cornea \cite{Bar-Cor:NATO} for orientable surfaces.  

There are other non-monotone situations where the Gromov width of a displaceable Lagrangian is known to be finite.  Recent results of Borman-McLean \cite{BorMcL:widths} show that if $L$ is orientable and admits a metric of non-positive sectional curvature in a Liouville manifold, then its Gromov width is bounded above by four times its displacement energy.

Uniruling for monotone Lagrangians is established via a mixture of the pearl complex from Biran-Cornea \cite{Bi-Co:Yasha-fest,Bi-Co:rigidity,Bi-Co:qrel-long} and Lagrangian Floer theory of Floer \cite{Fl:Morse-theory}, Oh \cite{Oh:HF1,Oh:HF1-add} and Fukaya-Oh-Ohta-Ono \cite{FO3:book-vol1,FO3:book-vol2}.  We show how these constructions can be adapted to our case, with an appropriate choice of Novikov ring.  The key argument is an elementary index computation of pseudo-holomorphic discs given in Lemma \ref{lem:index}, which itself relies on a technical result of Lazzarini \cite{Laz:decomp}.  There is no need to invoke the cluster complex of Cornea-Lalonde \cite{Cor-La:Cluster-1} or the general Lagrangian Floer theory of \cite{FO3:book-vol1}; both these theories are much more complicated algebraically and even more so analytically.

We summarize in the next proposition the algebraic structures that we define for orientable surfaces by adapting the techniques of Biran-Cornea, without any monotonicity assumptions.  The Novikov ring $\Lambda$ is defined in \S \ref{sec:pearl}.
\begin{prop} \label{thm:mainstructures}
 Given a closed orientable Lagrangian surface $L \subset (M, \omega)$, there is a second category subset $\jreg \subset \mathcal{J}_\omega$ of regular compatible almost complex structures for which the following algebraic structures are defined and depend only on the connected component $[J] \in \pi_0(\jreg)$.
  \begin{itemize}
  \item The Lagrangian quantum homology ring $QH(L, [J]; \Lambda)$ of $L$, endowed with the quantum product;
  \item The Lagrangian Floer homology $HF(L, H, \{ J_t \}; \Lambda)$ of $L$, where $\mathbf{J}:= \{ J_t \}$ is a generic path of regular almost complex structures and $H$ is a Hamiltonian.  It is a left $QH(L, [J_0]; \Lambda)$-module and a right $QH(L, [J_1]; \Lambda)$-module;
  \item The $QH(L, [J_0])$-module isomorphism $PSS\co QH(L, [J_0]) \to HF(L, H, \mathbf{J})$
 \end{itemize}
\end{prop}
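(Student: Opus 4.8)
The plan is to carry over the pearl complex of Biran-Cornea \cite{Bi-Co:Yasha-fest,Bi-Co:rigidity,Bi-Co:qrel-long} essentially verbatim, substituting the index restriction of Lemma \ref{lem:index} for the monotonicity hypothesis, and the Novikov ring $\Lambda$ of \S\ref{sec:pearl} for their ground ring. The point of passing to $\Lambda$ is that, without monotonicity, the symplectic area of a disc is no longer proportional to its Maslov index; organizing all counts as formal series in the area variable lets us collect, at each fixed Maslov index, the contributions of the infinitely many homotopy classes realizing it. Gromov compactness guarantees that only finitely many such classes carry curves below any energy bound, so each structure coefficient satisfies the Novikov finiteness condition and is a well-defined element of $\Lambda$.

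First I would fix a Morse-Smale pair $(f,\rho)$ on $L$ and form the free $\Lambda$-module on $\Crit(f)$, with differential $d=\partial_0+\partial_1+\cdots$, where $\partial_0$ is the Morse differential and $\partial_k$ counts rigid pearly trajectories passing through $k$ nonconstant $J$-holomorphic discs, each weighted by the area variable recording its class. The relevant moduli space $\mathcal{P}(x,y;A)$ of pearly trajectories from $x$ to $y$ in total class $A$ has expected dimension $|x|-|y|+\mu(A)-1$, so $\partial_k$ is assembled from its zero-dimensional components. The subset $\jreg$ is defined exactly as in the monotone theory: by a Sard-Smale argument, for $J$ in a second category subset of $\mathcal{J}_\omega$ every somewhere-injective disc, and every pearly configuration built from such discs, is cut out transversally, so the $\mathcal{P}(x,y;A)$ are smooth of the expected dimension. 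Multiply-covered discs are reduced to their underlying simple disc through Lazzarini's decomposition \cite{Laz:decomp}, which is exactly the input that Lemma \ref{lem:index} is designed to control.

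The heart of the matter is the chain of algebraic identities: $d^2=0$, associativity of the quantum product on $QH(L,[J];\Lambda)$, compatibility of the two-sided action on $HF(L,H,\mathbf{J};\Lambda)$, and the chain-map property of $PSS$. Each follows from identifying the codimension-one boundary of a compact one-dimensional moduli space and checking that the broken configurations cancel in pairs. The boundary strata come in three flavors: breaking of a Morse flow line, breaking of a pearly trajectory at an interior critical point, and bubbling of a disc at a boundary marked point. The first two are the familiar strata that reassemble into the desired identity, while the third is where non-monotonicity would normally be fatal: a priori a disc bubble of Maslov index $0$ or $1$ could appear and produce a genuine codimension-one stratum with no cancelling partner. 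This is precisely what Lemma \ref{lem:index} forbids, since for an orientable surface every simple disc has Maslov index at least $2$; after Lazzarini's decomposition, no disc bubble of index below $2$ can occur, and the remaining disc bubbling sits in codimension at least two. I expect this compactness-and-gluing analysis at the boundary, including the bookkeeping of disc-and-sphere degenerations with the disc part pinned down by Lemma \ref{lem:index} and sphere bubbling handled by the same transversality arguments, to be the main obstacle; once it is in place, the cobordism arguments of Biran-Cornea transfer without change and yield all the stated structures.

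It remains to construct $PSS$ and to establish the claimed dependence only on $[J]\in\pi_0(\jreg)$. I would build $PSS$ by counting the usual spiked pearly configurations and verify the chain-map and module-map properties by the identical boundary analysis. Invariance under change of almost complex structure is obtained from a continuation argument along a generic path inside $\jreg$, which induces a chain homotopy equivalence; the index restriction keeps the parametrized moduli spaces well-behaved along the path, so the construction is invariant within a connected component of $\jreg$ but may jump across a wall of $\mathcal{J}_\omega\setminus\jreg$, which is why only dependence on $[J]$ is asserted. Finally, the invertibility of $PSS$ follows as in \cite{Bi-Co:rigidity} by comparing it with a morphism in the opposite direction and exploiting the area filtration carried by $\Lambda$, whose associated graded recovers the Morse homology of $L$.
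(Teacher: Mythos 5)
Your proposal follows essentially the same route as the paper: it substitutes the index bound of Lemma \ref{lem:index} (via Lazzarini's decomposition) for monotonicity to exclude Maslov $0$ and $1$ disc bubbles in the codimension-one boundary analysis, uses the Novikov ring $\Lambda$ to absorb the failure of area--index proportionality, and explains the restriction to a chamber of $\jreg$ exactly as in Lemma \ref{lem:nonreg}. The argument is correct and matches the paper's construction of the pearl complex, Floer module structure and PSS morphism.
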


Details of the construction are given mostly for the Lagrangian quantum homology in \S \ref{sec:lagqh}, as this should emphasize the main ideas;  the other structures are only quickly sketched in \S \ref{sec:lagfloermodule}.  The module property of the PSS isomorphism is adapted from  the author \cite{Cha:refinement} and was first proved by Leclercq \cite{Leclercq:spectral} when there is no bubbling. Finally, a uniruling theorem is given in \S \ref{sec:proof}, from which we  deduce Theorem \ref{thm:main}.

\begin{rem}\label{rem:Paul}
Assume that $M$ is closed and denote by $\beta_2^+(M)$ the dimension of the positive definite part of the intersection form on $H_2(M)$.  If $L$ is a closed orientable Lagrangian surface of genus $g$ at least 2, then $\beta_2^+(M) \geq 2$.  Indeed $\omega^2 >  0$ and $[L]^2 = -\chi(L) = 2g -2 >0$.  Also, $[L]$ is linearly independent from $\omega$, so that $\beta_2^+(M) \geq 2$.

Much is known about symplectic 4-manifolds and the value of $\beta_2^+(M)$, see for example Baldridge \cite{Bald:b2plus},  Gompf-Stipsicz \cite{Gom-Stip:kirby} and McDuff-Salamon \cite{McD-Sa:b2plus}.
\end{rem}

\

\noindent {\bf Acknowledgements.}  I would like to thank Octav Cornea for encouraging me to write this article, for many useful explanations and comments on early drafts.  I thank Paul Biran for bringing Remark \ref{rem:Paul} to my attention.  The author was supported by an ETH Z\"urich fellowship.

% !TEX root = lagsurf.tex

\section{Algebraic structures and the proof of Proposition \ref{thm:mainstructures}}\label{sec:structures}

\subsection{Preliminaries}\label{sec:prelim}
Throughout the paper, $(M, \omega)$ is a tame symplectic four dimensional manifold.  The set of all $\omega$-compatible almost complex structures is denoted by $\mathcal{J}_\omega$, the induced Riemannian metric is $g(v_1, v_2):= \omega(v_1, J v_2)$ and the associated first Chern class is written as $c_1(M)$ or $c_1$.

The Hamiltonian vector field $X_{H_t}$ of a compactly supported $H\co S^1 \times M \to \R$ is uniquely defined by $\omega(X_{H_t}, \cdot) = -dH_t(\cdot)$.  Its time one flow $\psi_1^H$ defines a Hamiltonian isotopy and the set of all these isotopies is the group $\text{Ham}(M, \omega)$.  The energy of $\psi \in \text{Ham}$ is defined as 
$$E(\psi) = \inf_{H \; | \; \psi_1^H = \psi} \int_{S^1} (\max_M H_t - \min_M H_t)dt.$$

Let $L$ be a closed Lagrangian surface in $M$, then there are two morphisms
$$\omega\co H_2(M, L) \to \R, \quad \mu\co H_2(M, L) \to \Z$$
given by the symplectic area and the Maslov index.  Recall that this index is defined for every $u\co (\Sigma, \partial \Sigma) \to (M, L)$, where $\Sigma$ is a surface with boundary, and that it depends only on the homology class of $u$.  Finally, we say that $L$ is monotone if two conditions hold:
\begin{enumerate}
  \item There exists $\rho > 0$ such that $\omega = \rho \mu$;
  \item The positive generator of the image of $\mu$, denoted by $N_L$, is greater or equal than two.
\end{enumerate}

In this paper, \textbf{no monotonicity conditions} are imposed on Lagrangians.
\subsection{Lagrangian quantum homology}\label{sec:lagqh}

%%%%%%%%%%%%%%%%%%%%%%%%%%%%%%%%%%%%%%%%%%%%%%%%%%%%%%%%%%%%%%%%%%%%%%%%%%%%
\subsubsection{Index restriction for $J$-holomorphic discs and spheres; counting discs of Maslov class two}\label{sec:discinv}
Recall that a disc $u\co (D^2, S^1) \to (M, L)$ is somewhere injective if there exists $z \in D^2$ such that 
$$u^{-1}(u(z)) = z, \; du(z) \neq 0,$$
and that $u$ is \textit{simple} if the set of injective points is dense.

The space of simple $J$-holomorphic discs in a class $A \in H_2(M, L)$ is denoted by $\Mm^*(A;J)$.  For an $A$-regular $J$, i.e. one for which the linearization $D_u$ of the $\bar{\partial}_J$ operator at $u$ is onto, this space is a manifold of dimension 
\begin{equation}\label{eq:dim}
 \dim \Mm^*(A;J) = n + \mu(A) = 2 + \mu(A). 
\end{equation}
We denote by $\jreg(A) \subset \mathcal{J}_\omega$ the second category subset of $A$-regular almost complex structures and define $\jreg := \bigcap_A \jreg(A)$.  Since $H_2(M, L)$ is countable, $\jreg$ is also of second category.

It is well-known that $J$-holomorphic spheres which are not simple are multiply covered.  For non simple discs, the same holds when the dimension of the Lagrangian is at least 3 and $J$ is generic, by a theorem of Lazzarini \cite{Laz:decomp}.  However, in dimension two the situation is quite different:
\begin{thm}[Theorem A in \cite{Laz:decomp}]\label{thm:decomp}
 Let $u\co (D^2, S^1) \to (M, L)$ be a non constant $J$-holomorphic disc.  Then there exists a finite family $\{ v_i \}$ of simple $J$-holomorphic discs $v_i\co (D^2, S^1) \to (M, L)$ and positive integers $\{ m_i \}$ such that, in $H_2(M, L)$, we have $[u] = \sum_i m_i [v_i]$ and $\bigcup_i v_i(D^2) = u(D^2)$.
\end{thm}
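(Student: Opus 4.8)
The plan is to follow the strategy of Lazzarini. The essential point, already flagged in the discussion above, is that a non-simple $J$-holomorphic \emph{sphere} is always a global branched cover $v \circ \phi$ of a single simple sphere, whereas a non-simple disc need not factor globally; instead it must be \emph{cut} into finitely many pieces, each of which carries a simple disc. I would therefore aim to divide the domain $D^2$ along a suitable one-dimensional set and analyze the resulting faces one at a time, extracting a simple disc from each and then reassembling.

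First I would pin down the local singular structure of $u$. By the Micallef--White normal form together with unique continuation (the Carleman similarity principle), the critical set $\Crit(u) = \{ z : du(z) = 0 \}$ is discrete in the interior, and near each interior point $u$ looks like $z \mapsto (z^k, o(z^k))$ in suitable holomorphic coordinates. Likewise two distinct local branches of $u$ meeting at a point of $M$ intersect in an isolated fashion -- here one may use positivity of intersections, since the target $M$ is four dimensional -- and two branches agreeing to infinite order must coincide identically. The upshot is that the set of points where $u$ fails to be a local embedding onto its image, or where $u$ repeats a value, is contained in a finite graph; the one genuinely different case is that of a true multiple cover, where every point is multiple and the relevant datum is instead the ramification pattern.

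Next I would build Lazzarini's non-injectivity graph $G(u) \subset D^2$: a finite $1$-complex containing $\Crit(u)$, the interior and boundary double points of $u$, and arcs joining them. I would show that each connected component $U_\alpha$ of $D^2 \setminus G(u)$ is a face on which $u$ factors as $u|_{\overline{U_\alpha}} = v_\alpha \circ \phi_\alpha$, where $\phi_\alpha\co \overline{U_\alpha} \to D^2$ is a branched covering and $v_\alpha\co (D^2, S^1) \to (M, L)$ is a \emph{simple} disc, with the boundary portion $\partial U_\alpha \cap S^1$ mapping into $L$ so that $v_\alpha$ is an honest disc with boundary on $L$. Collecting the distinct simple discs among the $v_\alpha$ as the family $\{ v_i \}$ and letting $m_i$ be the total covering multiplicity with which the faces map onto $v_i$, additivity of area and Maslov index over the faces yields $[u] = \sum_i m_i [v_i]$ in $H_2(M,L)$, while $\bigcup_\alpha \overline{U_\alpha} = D^2$ gives at once $\bigcup_i v_i(D^2) = u(D^2)$.

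The main obstacle is precisely the construction of $G(u)$ and the verification that each face yields a genuine disc with boundary on $L$. Two points demand real care: first, controlling $u$ along the boundary $S^1$, where the Lagrangian boundary condition must be propagated to every $v_i$ and where boundary critical and double points are subtler than interior ones; and second, ruling out that the cutting produces a closed piece (a sphere) in place of a disc -- this is exactly where the combinatorics of the graph, and the departure of the disc case from the sphere case, enter decisively. Once each face is known to be a disc admitting the stated factorization, the homological identity and the identification of images follow formally.
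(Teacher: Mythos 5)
First, a point of calibration: the paper does not prove this statement at all. It is quoted verbatim as Theorem A of Lazzarini's paper and used as a black box, so there is no internal proof to compare yours against; what you have written is an outline of Lazzarini's own argument. Your outline does identify the correct strategy: local normal forms via Micallef--White and the Carleman similarity principle, a non-injectivity graph $G(u)$ inside the domain, cutting $D^2$ into faces on each of which $u$ factors as a branched cover of a simple disc with boundary on $L$, and then summing multiplicities to obtain $[u]=\sum_i m_i[v_i]$ and $\bigcup_i v_i(D^2)=u(D^2)$. One inaccuracy worth flagging: you invoke positivity of intersections ``since the target $M$ is four dimensional'', but Lazzarini's theorem does not rest on $\dim M=4$; the isolation of distinct local branches comes from unique continuation, valid in any dimension, and indeed the surrounding discussion in the paper appeals to the same reference for Lagrangians of dimension at least $3$.

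The genuine gap is the one you flag yourself: the construction of $G(u)$ and the verification that each face carries a simple disc with Lagrangian boundary are not carried out, and these constitute essentially the entire content of Lazzarini's paper. Concretely, you never define the relation whose locus $G(u)$ is (two points of $D^2$ are related when the germs of the image of $u$ at them coincide), you do not prove that this locus is a finite $1$-complex rather than, say, an accumulating family of arcs, and you do not establish the factorization $u|_{\overline{U_\alpha}}=v_\alpha\circ\phi_\alpha$ over each face, nor rule out that a face closes up into a sphere or that its boundary fails to land on $L$. The final homological identity is indeed formal once all of that is in place, but as submitted the proposal is a plan rather than a proof; for the purposes of this paper the honest move is the one the author makes, namely to cite Theorem A of Lazzarini directly.
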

\noindent Here $[u]$ denotes the image of the positive generator of $\Z \cong H_2(D^2, S^1)$ by $u_*$.

The following elementary lemma is the key to the construction of all the algebraic structures considered in Proposition \ref{thm:mainstructures}.
\begin{lem}\label{lem:index}
  Let $L$ be an orientable Lagrangian surface, $J \in \jreg$ a regular almost complex structure and $u\co (D^2, S^1) \to (M, L)$ a non-constant $J$-holomorphic disc, then $\mu(u) \geq 2$.  If $u$ is a non-constant sphere, then $c_1(u) \geq 1$. In both cases $u$ is simple if equality holds.
\end{lem}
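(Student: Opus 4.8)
The plan is to reduce the general statement to the case of \emph{simple} discs and spheres, where the dimension formula \eqref{eq:dim} applies directly, and then to recombine the pieces via the appropriate decomposition results. For discs this decomposition is exactly Lazzarini's Theorem \ref{thm:decomp}, while for spheres I would use the classical fact recalled above, that a non-simple sphere is a multiple cover of a simple one. The two extra ingredients are the freeness (or at least finiteness of stabilizers) of the reparametrization action on simple curves, and a parity observation coming from the orientability of $L$.

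First I would treat a simple disc $v$. Since $J \in \jreg$, the space $\Mm^*([v];J)$ is a manifold of dimension $2 + \mu(v)$ containing $v$. The group $\mathrm{Aut}(D^2) \cong \mathrm{PSL}(2,\R)$, of dimension $3$, acts on $\Mm^*([v];J)$ by reparametrization, and this action is free on simple discs: if $v \circ \phi = v$ then $\phi$ fixes every injective point of $v$, and since these are dense we get $\phi = \id$. Hence the orbit of $v$ is a $3$--dimensional submanifold of $\Mm^*([v];J)$, forcing $2 + \mu(v) \geq 3$, that is $\mu(v) \geq 1$. Now I invoke orientability: the loop of Lagrangian tangent spaces along $\partial v$ is the boundary restriction $v^* TL|_{S^1}$, an orientable real bundle over $S^1$, so its first Stiefel--Whitney class vanishes; since this class is the mod $2$ reduction of the Maslov index, $\mu(v)$ is even. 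Combined with $\mu(v) \geq 1$ this yields $\mu(v) \geq 2$.

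To pass to an arbitrary non-constant disc $u$, I would apply Theorem \ref{thm:decomp} to write $[u] = \sum_i m_i [v_i]$ with each $v_i$ a simple disc and each $m_i \geq 1$. By additivity of the Maslov index, $\mu(u) = \sum_i m_i \mu(v_i) \geq 2 \sum_i m_i \geq 2$, the last inequality because $u$ is non-constant so the family is non-empty. For the equality case, $\mu(u) = 2$ together with $\mu(v_i) \geq 2$ and $m_i \geq 1$ forces $\sum_i m_i = 1$, so the decomposition consists of a single disc $v_1$ of multiplicity one with $u(D^2) = v_1(D^2)$; this means $u$ covers its image with degree one and is therefore simple. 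The sphere case is entirely parallel: for a simple sphere $v$ the moduli space has dimension $2n + 2 c_1(v) = 4 + 2 c_1(v)$, and the $\mathrm{PSL}(2,\C)$--action, which has finite stabilizer on a simple sphere and hence a $6$--dimensional orbit, gives $4 + 2 c_1(v) \geq 6$, so $c_1(v) \geq 1$ with no parity correction needed; writing a general non-constant sphere as a $k$--fold cover $u = v \circ \phi$ of a simple sphere gives $c_1(u) = k\, c_1(v) \geq 1$, with equality only if $k = 1$, that is $u$ simple.

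The main obstacle, or at least the point requiring the most care, is the equality analysis for discs: extracting from Theorem \ref{thm:decomp} that a single-disc, multiplicity-one decomposition genuinely implies that $u$ itself is simple, rather than merely homologous to a simple disc with the same image. This is exactly where the full strength of the identity $\bigcup_i v_i(D^2) = u(D^2)$ in Lazzarini's statement is used. The parity step is the conceptual key that upgrades the naive dimension bound $\mu \geq 1$ to the sharp $\mu \geq 2$, and it is precisely here that orientability of $L$ enters; without it the dimension count alone only excludes $\mu \leq 0$.
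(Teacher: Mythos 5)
Your proof is correct and follows essentially the same route as the paper: evenness of the Maslov index from orientability, the dimension count for simple curves via the free reparametrization action, and Lazzarini's decomposition (resp.\ multiple covers) to handle non-simple discs (resp.\ spheres). If anything, your treatment of the equality case for discs is spelled out more explicitly than in the paper, which simply asserts simplicity when $\mu(u)=2$.
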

\begin{proof}
  First, given any orientable Lagrangian, regardless of dimension, the Maslov index of any disc $u$ with boundary on $L$ is even, since the loop $(u|_{S^1})^*  TL \in \mathcal{L}(\R^{2n})$ lifts to the double cover $\mathcal{L}^{or}(\R^{2n})$ of oriented Lagrangian subspaces.
  
  Using the notations of Theorem \ref{thm:decomp}, we have $\mu(u) = \sum_i m_i \mu(v_i)$, where all $v_i$'s are non constant simple $J$-holomorphic discs. Thus biholomorphisms of the disc, denoted by $\mathcal{G}:= PSL(2, \R) = Aut(D^2) \cong S^1 \times D^2$, act freely on $\Mm^*([v_i]; J)$ and equation (\ref{eq:dim}) shows that 
  $$0 \leq \dim \Mm^*([v_i]; J)/\mathcal{G} = 2 + \mu(v_i) - 3 = \mu(v_i) - 1 \iff
  2 \leq \mu(v_i).$$
  Hence we get that $\mu(u) = \sum_i m_i \mu(v_i) \geq 2.$
  
  As for the last part, $u$ can be written as $u = v \circ d_k$, where $v\co S^2 \to M$ is simple and $d_k\co S^2 \to S^2$ is a degree $k \geq 1$ holomorphic covering map.  Then
  $$0 \leq \dim \Mm^*([v]; J)/PSL(2,\C) = 4 + 2c_1(v) - 6,$$
  and $c_1(u) = kc_1(v) \geq k$.
\end{proof}

Fix $J \in \jreg$ and $A \in H_2(M, L)$ such that $\mu(A) =2$, then there is an evaluation map
$$ev\co  \mathcal{M}^*(A; J) \times S^1 / \mathcal{G} \to L$$
$$[u, \theta] \mapsto u(\theta).$$ 
The action of $g \in \mathcal{G}$ is given by $g \cdot (u, \theta) = (u \circ g, g^{-1}(\theta))$.  The index computations above and Gromov compactness for discs (see Frauenfelder \cite{Fr:msc}) yield that $\mathcal{M}^*(A; J) \times S^1/\mathcal{G}$ is a closed 2-dimensional manifold, hence $ev$ has a well-defined mod 2 degree, giving the algebraic count of $J$-discs representing $A$ and going through a generic point of $L$, denoted by $d(A, J)$.  However, this invariant might depend on the choice of $J$.  Indeed, fix a path $\mathbf{J}:= \{ J_t \}, \; J_t \in \mathcal{J}_\omega$ connecting $J_0$ and $J_1$.  Then for a generic choice of such path (see McDuff-Salamon \cite[Definition 3.1.6]{McD-Sa:Jhol-2}), the space 
$$\mathcal{W}^*(A, \mathbf{J}):= \{ (t, u) \; | \; t \in [0,1], \; u \in \mathcal{M}^*(A, J_t)\}$$
provides a cobordism between $\mathcal{M}^*(A, J_0)$ and $\mathcal{M}^*(A, J_1)$.  The next lemma is elementary, but it is worth mentioning since it is the possible source of non-invariance for the Lagrangian quantum homology of orientable surfaces, see \S \ref{sec:pearl}.

\begin{lem}\label{lem:nonreg}
 Fix a generic path $\mathbf{J}$.  If it is in a connected component of $\jreg$, called a chamber, then $\mathcal{W}^*(A, \mathbf{J})$ is compact, hence $d(A, J_0) = d(A, J_1)$.  Otherwise, it admits a compactification which must include non-constant Maslov zero $J_t$-holomorphic discs for every $J_t \notin \jreg$.
\end{lem}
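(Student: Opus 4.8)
The plan is to run the one-parameter cobordism argument for the point-constrained evaluation that defines $d(A, J)$, controlling the Gromov compactification of $\mathcal{W}^*(A, \mathbf{J})$ by means of Lemma \ref{lem:index}. Every disc in class $A$ carries the fixed energy $\omega(A)$, so Gromov compactness for discs applies: a sequence $(t_n, u_n) \in \mathcal{W}^*(A, \mathbf{J})$ subconverges to a stable $J_{t_\infty}$-holomorphic configuration in class $A$, assembled from a principal disc, boundary disc bubbles and interior sphere bubbles, whose indices obey $2 = \mu(A) = \sum_i \mu(\text{disc}_i) + 2 \sum_j c_1(\text{sphere}_j)$. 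The whole lemma is the determination of which terms on the right can survive.

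First I would treat the chamber case, in which every $J_t \in \jreg$ and Lemma \ref{lem:index} controls the limit: each non-constant disc has $\mu \geq 2$ and each non-constant sphere has $c_1 \geq 1$. A splitting into two non-constant discs would force $\mu(A) \geq 4$ and is impossible. A sphere bubble forces, via the displayed identity, a constant principal disc and a single sphere with $c_1 = 1$; the evaluation point then coincides with the node, which lies on $L$, so the configuration needs a simple $c_1 = 1$ sphere through the generic constraint point $p_0 \in L$. As these spheres form a $0$-dimensional family for fixed $J$, hence a $1$-dimensional family along $\mathbf{J}$, their images sweep out at most a $3$-dimensional subset of $M$ and meet $L$ in dimension at most one, which a generic $p_0$ avoids. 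No bubbling survives, the point-constrained parametrized moduli space is a compact $1$-manifold whose boundary at $t = 0$ and $t = 1$ consists of the $d(A, J_0)$ and $d(A, J_1)$ discs through $p_0$, and counting them modulo $2$ gives $d(A, J_0) = d(A, J_1)$.

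For a path leaving the chamber I would first note that the sphere-bubble exclusion used only the generic point and the absence of low-Chern spheres along a generic path, so it persists and leaves disc bubbling as the sole source of non-compactness. Here Lemma \ref{lem:index} no longer applies and a splitting $A = A_1 + A_2$ with $\mu(A_1) + \mu(A_2) = 2$ is allowed; since $L$ is orientable the Maslov indices are even, and along a generic path no disc of negative Maslov index occurs, its parametrized moduli space having expected dimension $\mu < 0$ after quotient by $\mathcal{G}$. The only surviving split is therefore $(\mu(A_1), \mu(A_2)) = (2, 0)$, in which the non-constant $A_2$-component is a Maslov zero $J_{t_*}$-disc at the bubbling value $t_*$. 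Conversely any non-constant Maslov zero disc has, for fixed $J$, moduli space of expected dimension $2 + 0 - 3 = -1$ after quotient by $\mathcal{G}$, so, being nonempty and negative-dimensional, it cannot be regular and witnesses $J_{t_*} \notin \jreg$. Thus the degenerations of the compactification are attached exactly to the wall crossings $J_t \notin \jreg$, and a non-constant Maslov zero disc sits in the compactification at each of them.

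The main obstacle is the transversality bookkeeping that pins the failure of compactness to the wall structure of $\jreg$: using the standard package for simple discs and spheres of McDuff-Salamon, one must ensure that along a generic path Maslov zero discs occur only at isolated, transversally crossed parameters and that no higher-order degeneration intervenes. Granting this, the whole dichotomy follows from the index accounting above together with the evenness of the Maslov index; the one new ingredient beyond the monotone theory is Lemma \ref{lem:index}, which furnishes the effective minimal Maslov number $2$ inside each chamber.
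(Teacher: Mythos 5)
Your argument is correct and follows the paper's proof in all essentials: Gromov compactness together with the index restrictions of Lemma \ref{lem:index} rules out every splitting inside a chamber, while at a wall the path-genericity count (the paper's $\dim \operatorname{coker} D_{u_k} = 1$, equivalently your expected-dimension computation) forces $\mu \geq 0$ for non-constant discs and $c_1 \geq 1$ for spheres, so that any degeneration must carry a non-constant Maslov zero disc. The only difference is one of detail: you exclude a lone $c_1 = 1$ sphere bubble over a constant disc by a generic point constraint on the evaluation, whereas the paper simply observes that Lemma \ref{lem:index} leaves a single simple curve in the limiting tree --- a point your version actually treats more explicitly.
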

\begin{proof} 
By Gromov compactness, sequences in $\mathcal{W}^*(A, \mathbf{J})$ converge to stable maps $(T, \sum A_k := A)$ where $T$ is a tree made up of $J_t$-holomorphic bubbles for some fixed $t$; each bubble is either a sphere or a disc $u_k$ representing a class $A_k$.  Moreover, their Maslov indices satisfy $\sum \mu(u_k) = \mu(A)$.  We use the convention that for spheres, the Maslov index is twice the Chern class.
 
Assume first that $\mathbf{J} \subset \jreg$.  Then Lemma \ref{lem:index} implies that there is only one curve in $T$ and the bubble tree is made of a single simple curve, hence it is already an element of $\mathcal{W}^*(A, \mathbf{J})$, which is thus compact.  Since degree is invariant under compact cobordisms, we get that $d(A, J_0) = d(A, J_1)$.

Assume now that $\mathbf{J}$ is not entirely contained in $\jreg$.  Then, by definition of regularity for the path $\mathbf{J}$, we have $\dim \text{coker } D_{u_k} = 1$, since we assume $J_t \notin \jreg$.
 
 If $u_k$ is a sphere, then since it is not constant, $PSL(2, \C)$ acts on it and $\dim \ker D_{u_k} \geq 6$.  Thus
 $$\dim \ker D_{u_k} - \dim \text{coker } D_{u_k} = \ind \; D_{u_k} = 2n + 2c_1(u_k) = 4 + 2c_1(u_k) \geq 6 - 1 = 5,$$
 so $2c_1(u_k) = \mu(u_k) \geq 2$. 
 
 If $u_k$ is a non-constant disc bubble, then $\dim \ker D_{u_k} \geq 3$ and $\dim \text{coker} D_{u_k} = 1$, hence
 $$\ind \; D_{u_k} = n + \mu(u_k) = 2 + \mu(u_k) \geq 3 - 1 = 2 \iff \mu(u_k) \geq 0.$$

Finally, since the total Maslov class of the tree is $\mu(A)=2$, the bubbles have index at most two and this concludes the proof.
\end{proof}

\begin{remnonum}
 Examples of Maslov zero non-regular $J$-holomorphic discs in Lagrangian tori can be found in Auroux \cite{Aur:t-duality}.
\end{remnonum}

\subsubsection{The pearl complex}\label{sec:pearl}
In this section we adapt the construction of the pearl complex following the presentation of Biran-Cornea \cite{Bi-Co:qrel-long} closely.  Lemma \ref{lem:index} and our choice of Novikov ring guarantee that their original arguments are still valid.

Let $J \in \jreg$ and $f\co L \to \R$ be a Morse-Smale function with respect to a generic Riemannian metric $\rho$, and denote by $\phi_t$ the \textit{negative} gradient flow of $(f, \rho)$.  The universal Novikov field is
$$\Lambda^{\text{univ}} := \{ \sum_k a_k T^{\lambda_k} \;| \; a_k \in \Z_2, \; \lambda_k \in \R, \; \lim_{k \to \infty} \lambda_k = \infty\}.$$
We set $\Lambda:= \Lambda^{\text{univ}}[q, q^{-1}]$ and grade it with $|q| = -1$.

The pearl complex of $f$ is the $\Lambda$-module
$$\mathcal{C}(f, J, \rho):= \text{span}_\Lambda \langle\Crit f\rangle.$$

The differential counts pearly trajectories which we now describe.  Given $x, y \in \Crit f$ and $A \in H_2(M, L)$, the space of pearls from $x$ to $y$ in the class $A$ is denoted by $\mathcal{P}(x, y, A; f, J, \rho)$ and consists of families $(u_1, ..., u_k)/ (\oplus_{i=1}^k \mathcal{G}_{-1, 1})$ such that
\begin{itemize}
 \item $u_i\co (D^2, S^1) \to (M, L)$ are non constant $J$-holomorphic discs.
 \item $u_1(-1) \in W^u(x)$.
 \item $\exists \; 0 < t_i < \infty$ such that $\phi_{t_i}(u_i(1)) = u_{i+1}(-1),$ for $i=1, ..., k-1$.
 \item $u_k(1) \in W^s(y)$.
 \item $\sum_i [u_i] = A$.
 \item $\mathcal{G}_{-1,1}$ is the subgroup of elements of $\text{Aut}(D^2)$ fixing $\pm 1$ and the direct sum of these groups act on the family $(u_1, ..., u_k)$.
\end{itemize}
Whenever $A=0$, a pearl means a negative gradient flow line from $x$ to $y$, modulo the $\R$-action.

Denote $\mathcal{P}^{*,d}(x, y, A; f, J, \rho)$ the subspace of pearls for which all the discs are simple and absolutely distinct.  A standard transversality argument shows that $\mathcal{P}^{*,d}(x, y, A; f, J, \rho)$ is either empty or a manifold of dimension $|x| - |y| + \mu(A) - 1$.  

%In order to show that the differential squares to zero, we introduce the space of broken pearls.  Fix two homology classes $A, B \in H_2(M, L)$, $x,y \in \Crit f$ and define $\mathcal{P}(x, y, A, B; f, J, \rho)$ to be the set of families $(u_1, ..., u_k, v_1, ..., v_r) / (\oplus_{i=1}^{k+r} \mathcal{G}_{-1. +1})$ such that 
%\begin{itemize}
% \item $u_i: (D^2, S^1) \to (M, L)$ and $v_i: (D^2, S^1) \to (M, L)$ are non constant $J$-holomorphic discs.
% \item $u_1(-1) \in W^u(x)$.
% \item $\exists \; 0 < t_i < \infty$ such that $\phi_{t_i}(u_i(1)) = u_{i+1}(-1),$ for $i=1, ..., k-1$.
% \item $u_k(1) = v_1(-1)$.
% \item $\exists \; 0 < s_i < \infty$ such that $\phi_{s_i}(v_i(1)) = v_{i+1}(-1),$ for $i=1, ..., r-1$.
% \item $v_r(1) \in W^s(y)$.
% \item $\sum_i [u_i] = A, \; \sum [v_i] = B$.
%\end{itemize}
%Those are pearls where exactly one flow line is constant between two discs.

%Denote $\mathcal{P}^{*,d}(x, y, A, B; f, J, \rho)$ the subspace of simple and absolutely distinct broken pearls.  Again, standard arguments show that it is either empty or a manifold of dimension $|x| - |y| + \mu(A) + \mu(B) - 2$.

In \cite[\S 3]{Bi-Co:rigidity} Biran and Cornea prove the following crucial result for \emph{monotone} Lagrangians, needed to show that pearls can be used to define a differential, which itself relies on Theorem \ref{thm:decomp} of Lazzarini.

\begin{prop}[\cite{Bi-Co:rigidity}]\label{prop:pearls}
  Let $L$ be a monotone Lagrangian and $f, \rho$ be as defined above.  Then there exists a second category subset $\jreg \subset \mathcal{J}_\omega$ with the following property.  For every $A \in H_2(M, L)$ and every $x, y \in \Crit f$ such that $|x| - |y| +\mu(A) - 1 \leq 1$, we have:
  \begin{enumerate}
   \item $\mathcal{P}^{*,d}(x, y, A; f, J, \rho) = \mathcal{P}(x, y, A; f, J, \rho)$, i.e. all pearls are automatically simple and absolutely distinct.
   \item If $|x| - |y| + \mu(A) - 1 = 0$, then $\mathcal{P}(x, y, A; f, J, \rho)$ is a compact 0-dimensional manifold, and hence consists of a finite number of points.
  \end{enumerate}
\end{prop}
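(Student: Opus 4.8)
The plan is to follow the Biran--Cornea framework, exploiting \emph{monotonicity} only through the consequence that every nonconstant $J$--holomorphic disc with boundary on $L$ has $\mu \geq N_L \geq 2$ and every nonconstant sphere has $\mu = 2c_1 \geq 2$. First I would construct $\jreg$ by intersecting the second category set of $A$--regular almost complex structures (for which each $\Mm^*(A; J)$ is a manifold of dimension $2 + \mu(A)$) with the residual sets guaranteeing transversality of the boundary evaluations at $\pm 1$ against the descending and ascending manifolds $W^u(x)$, $W^s(y)$, and against one another along the incidence conditions $\phi_{t_i}(u_i(1)) = u_{i+1}(-1)$. A standard fibre-product transversality argument then exhibits $\mathcal{P}^{*,d}(x,y,A;f,J,\rho)$ as a manifold of the claimed dimension $|x|-|y|+\mu(A)-1$.

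For part (1) I would stratify $\mathcal{P}(x,y,A)$ by the combinatorial type of the configuration and bound each ``bad'' stratum. If some $u_i$ fails to be simple, Theorem \ref{thm:decomp} of Lazzarini replaces it by its underlying simple disc; since every simple disc satisfies $\mu \geq 2$, reducing a $d$--fold cover to its simple model drops the total Maslov index by at least $(d-1)\cdot 2 \geq 2$. Hence this locus embeds into $\mathcal{P}^{*,d}(x,y,A')$ with $\mu(A') \leq \mu(A) - 2$, of expected dimension $\leq |x|-|y|+\mu(A)-1 - 2 \leq -1$, and is therefore empty for $J \in \jreg$. The same count, applied to the reparametrisation class of a shared simple disc, shows the non-absolutely-distinct stratum also has codimension at least two and so is empty as well, giving $\mathcal{P} = \mathcal{P}^{*,d}$.

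For part (2), when $|x|-|y|+\mu(A)-1 = 0$ the space $\mathcal{P}^{*,d}(x,y,A)$ is a $0$--dimensional manifold, and I would prove compactness by excluding all limiting degenerations via Gromov compactness. A gradient segment breaking at an intermediate critical point $z$ produces a configuration in $\mathcal{P}^{*,d}(x,z,A_1)\times\mathcal{P}^{*,d}(z,y,A_2)$ whose expected dimensions sum to $-1$, hence empty for generic $(f,\rho)$; a disc or sphere bubbling off attaches, by monotonicity, a component of Maslov index at least two, dropping the expected dimension of the principal configuration below zero, so it too is excluded. With no breaking and no escape of energy, every sequence subconverges inside the space, which is thus a compact, and therefore finite, $0$--manifold.

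The main obstacle is concentrated in part (1): one must arrange the simultaneous transversality of all the evaluation and incidence conditions defining a pearl, and then verify that, after the Lazzarini reduction, the non-simple and non-absolutely-distinct strata genuinely carry negative virtual dimension. This is exactly where monotonicity does the work, for it is the bound $\mu \geq N_L \geq 2$ on each simple disc that forces every reduction to cost at least two dimensions. It is precisely this estimate that fails for non-monotone orientable surfaces, and where the index analysis of Lemma \ref{lem:index} must step in as a replacement.
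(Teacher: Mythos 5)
A preliminary remark: the paper does not actually prove Proposition \ref{prop:pearls}; it quotes it from Biran--Cornea \cite{Bi-Co:rigidity} and only adds the observation that the proof goes through verbatim once every nonconstant $J$-holomorphic disc is known to have $\mu\geq 2$. Your sketch correctly isolates this as the sole role of monotonicity, and your overall architecture --- transversality for $\mathcal{P}^{*,d}$, emptiness of the non-simple and non-absolutely-distinct strata by a dimension count, exclusion of breaking and bubbling for compactness in the zero-dimensional case --- is exactly the Biran--Cornea strategy.

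There is, however, a genuine gap in your treatment of the non-simple stratum, and it sits precisely in the dimension relevant to this paper. You reduce a non-simple disc to ``its underlying simple disc'' and quantify the index drop as $(d-1)\cdot 2$ for a $d$-fold cover. But for $\dim L=2$ a non-simple disc need not be multiply covered: this is exactly why the paper quotes Lazzarini's Theorem \ref{thm:decomp}, which only provides a decomposition $[u]=\sum_i m_i[v_i]$ into possibly several distinct simple discs. When all $m_i=1$ and there are at least two $v_i$, the total Maslov index of the replacement configuration equals $\mu(u)$ and the drop of at least two that your argument relies on simply does not occur. In that case the negativity of the expected dimension must instead be extracted from the degenerate incidence conditions of the reduced configuration (the simple discs meet at points rather than along gradient segments of positive length, and the marked points $u(\pm1)$ must be redistributed among the $v_i$); this combinatorial bookkeeping is the actual content of \cite[\S 3]{Bi-Co:rigidity} and cannot be replaced by the index count alone. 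The same caveat applies to your one-line dismissal of the non-absolutely-distinct stratum. The remaining steps --- the fibre-product transversality, the breaking analysis with expected dimensions summing to $-1$, and the exclusion of bubbles of Maslov index at least two in the zero-dimensional case --- are sound and match the cited argument.
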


First note that Lazzarini's result holds for Lagrangians which are not monotone.  Also, a careful inspection of Biran and Cornea's proof shows that, as long as the Maslov index of J-holomorphic discs is at least two, then Proposition \ref{prop:pearls} is true, without the monotonicity assumption.  By Lemma \ref{lem:index}, this condition is automatically satisfied for closed orientable surfaces, hence
%\begin{prop}[Proposition 3.1.6 in \cite{Bi-Co:qrel-long}]\label{prop:brokenpearls}
% Let $L, f, \rho$ be as above.  Let $A, B \in H_2(M, L)$ be such that $|x| - |y| + \mu(A) + \mu(B) -1 \leq 1$. Then there exists a second category subset $\jreg \subset \mathcal{J}_\omega$ such that:
% \begin{enumerate}
%  \item $\mathcal{P}^{*,d}(x, y, A,B; f, J, \rho) = \mathcal{P}(x, y, A,B; f, J, \rho)$
%  \item If $|x| - |y| + \mu(A) + \mu(B) -1 \leq 0$, then $\mathcal{P}(x, y, A,B; f, J, \rho)$ is empty.
%  \item If $|x| - |y| + \mu(A) + \mu(B) -1 = 1$, then $\mathcal{P}(x, y, A,B; f, J, \rho)$ is a compact 0-dimensional manifold.
% \end{enumerate}
%\end{prop}
we can define the pearl differential
$$\partial\co \mathcal{C}_*(f, J, \rho) \to \mathcal{C}_{*-1}(f, J, \rho)$$
$$x \mapsto \sum_y (\sum_{\substack{A \in H_2(M, L) \\ |x| - |y| + \mu(A) -1 =0} } \#_2 \mathcal{P}(x,y, A; f, J, \rho)T^{\omega(A)}q^{\mu(A)})y$$
The coefficient of each $y$ is a well defined element of the ring $\Lambda$ by standard arguments.

\begin{rem}\label{rem:lagrestriction}
The arguments in this paper work also for Lagrangians whose minimal Maslov number is at least two when restricted to J-holomorphic discs, which include the class of monotone Lagrangians and, a fortiori, orientable surfaces.  It is however not clear to the author how to verify this condition in general.
\end{rem}

In order to show that $\partial^2 = 0$, one uses Lemma \ref{lem:index} to rule out side bubbling and conclude that one-dimensional family of pearls can be compactified using only broken flow lines.  We denote the resulting Lagrangian quantum homology by $QH(L, J; \Lambda)$ or $QH(L, J)$.

The quantum product can also be defined and makes Lagrangian quantum homology a ring with unity, as well as a left module over itself:
$$\circ\co QH_p(L, J; \Lambda) \otimes QH_q(L, J; \Lambda) \to QH_{p+q-2}(L, J; \Lambda).$$

Comparison morphisms 
$$\psi\co \mathcal{C}(f_1, J_1, \rho_1) \to \mathcal{C}(f_2, J_2, \rho_2)$$
are defined via Morse homotopies and regular paths of almost complex structures.  As was pointed out in Lemma \ref{lem:nonreg}, it is necessary to restrict to a regular path in a connected component of $\jreg$ to avoid non regular bubbling of Maslov zero discs.  In this case we directly get
\begin{prop}
 For $[J_1] = [J_2] \in \pi_0(\jreg)$, the comparison morphisms are chain maps which induce isomorphisms in homology and are compatible with the quantum product.
\end{prop}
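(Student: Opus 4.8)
The plan is to transfer the continuation-map arguments of Biran-Cornea \cite{Bi-Co:rigidity,Bi-Co:qrel-long} to our setting, using Lemma \ref{lem:index} together with the chamber hypothesis in place of monotonicity. The comparison morphism $\psi$ is defined by counting mixed pearly trajectories associated to a Morse homotopy interpolating between $(f_1, \rho_1)$ and $(f_2, \rho_2)$, together with a generic path $\mathbf{J} = \{ J_t \}$ joining $J_1$ and $J_2$. Since $[J_1] = [J_2] \in \pi_0(\jreg)$, we choose $\mathbf{J}$ to lie entirely inside a single chamber, so that $J_t \in \jreg$ for every $t$; by Lemma \ref{lem:index} every non-constant $J_t$-holomorphic disc then has $\mu \geq 2$, which is the only feature of the monotone case actually exploited in \cite{Bi-Co:rigidity}.

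First I would verify that $\psi$ is a chain map. This is a boundary analysis of the one-dimensional moduli spaces of comparison pearls. By the standard transversality and Gromov compactness arguments (as in Proposition \ref{prop:pearls}, which applies here because, $\mathbf{J}$ lying in a chamber, every non-constant disc has $\mu \geq 2$), these spaces compactify to one-dimensional manifolds with boundary. The minimal Maslov bound confines bubbling to the expected codimension, so that the only codimension-one strata are the breakings of the flow segment at the source and at the target, contributing $\psi \circ \partial_1$ and $\partial_2 \circ \psi$ respectively. Equating the total boundary count to zero over $\Lambda$ yields $\partial_2 \psi = \psi \partial_1$.

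To obtain an isomorphism on homology I would construct a reverse comparison morphism $\psi'$ from a homotopy and a path running backwards inside the same chamber, and show that $\psi' \circ \psi$ is chain homotopic to the identity. The chain homotopy is assembled from a two-parameter family of pearly trajectories governed by a homotopy of homotopies; the codimension-one boundary of its moduli spaces consists of the two endpoint compositions, the identity, and the usual breakings, which organize into $\partial K + K \partial$. The same minimal Maslov control holds over the enlarged parameter space, so no unexpected strata arise; symmetrically $\psi \circ \psi'$ is chain homotopic to the identity, whence $\psi$ is an isomorphism on homology. Compatibility with the quantum product is established by the analogous device: one interpolates the pearl-tree operation defining $\circ$ along the homotopy, and the boundary of the resulting one-dimensional moduli spaces expresses $\psi_*(a \circ b) = \psi_*(a) \circ \psi_*(b)$, again invoking only the bound $\mu \geq 2$.

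The main obstacle, and the reason the hypothesis $[J_1] = [J_2]$ cannot be relaxed, is precisely the control of Maslov-zero bubbling across all of these multi-parameter families. Lemma \ref{lem:nonreg} shows that as soon as a path leaves a chamber it meets almost complex structures admitting non-constant Maslov-zero discs; these are codimension-zero phenomena that would appear in the interior of the moduli spaces and destroy both the chain-map identity and the chain homotopies above. Remaining inside one chamber is exactly what lets Lemma \ref{lem:index} supply the minimal Maslov bound uniformly in $t$, so that the analytic package of \cite{Bi-Co:rigidity,Bi-Co:qrel-long} carries over essentially verbatim.
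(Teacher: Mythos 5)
Your proposal is correct and follows essentially the same route as the paper, which likewise defines the comparison morphisms via Morse homotopies and a regular path of almost complex structures confined to a single chamber of $\jreg$, and then invokes the Biran--Cornea machinery once Lemma \ref{lem:index} guarantees the minimal Maslov bound $\mu \geq 2$ uniformly along the path. The paper states this in a single sentence and cites Lemma \ref{lem:nonreg} for why leaving the chamber would break the argument; your write-up simply makes the standard cobordism and chain-homotopy steps explicit.
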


%%%%%%%%%%%%%%%%%%%%%%%%%%%%%%%%%%%%%%%%%%%
\subsection{Lagrangian Floer homology and $QH(L, [J])$-module structure}\label{sec:lagfloermodule}
Fix a Hamiltonian $H\co S^1 \times M \to \R$ such that $(\phi^H_1)^{-1}(L)$ intersects $L$ transversally and denote by $\mathcal{O}(H)$ the finite set of contractible time-1 periodic orbits of the Hamiltonian flow $\phi_t^H$. 

For each half-disc $u$ contracting such an orbit $\gamma$, recall that there is a Maslov index $\mu(u, \gamma)$ such that $\mu(u, \gamma) + \frac{\dim L}{2} \in \Z$, see Robbin-Salamon \cite{Ro-Sa:spectral}.  There is an equivalence relation on tuples $(\gamma, u)$ defined by $(\gamma_1, u) \sim (\gamma_2, v) \iff \gamma_1 = \gamma_2$ and $\mu(\gamma_1, u) = \mu(\gamma_2, v)$.  For each $\gamma$, fix a class $\tilde{\gamma}:= [u_\gamma, \gamma]$ and grade it by $|\tilde{\gamma}| = 1 - \mu(\tilde{\gamma})$.

Denote by $\mathcal{J}^I := C^\infty([0,1], \mathcal{J}_\omega)$ smooth paths of compatible almost complex structures.  Given $\mathbf{J}:= \{ J_t \} \in \mathcal{J}^I$,  $\gamma_-, \gamma_+ \in \mathcal{O}(H)$, the set of Floer strips from $\tilde{\gamma}_-$ to $\tilde{\gamma}_+$ in a homology class $A \in H_2(M, L)$ is
$$\mathcal{M}(\tilde{\gamma}_-, \tilde{\gamma}_+, A; \mathbf{J}, H):=
\left\{ 
\begin{array}{c}
u\co \mathbb{R} \times [0,1] \to M,\\
u(\R, i) \in L, \; i=0,\;1\\
\end{array}
\middle|
\begin{array}{c}
\partial_s u  + J_t(u) (\partial_t u - X_{H_t}(u)) = 0\\
\lim_{s \to \pm \infty} u(s,t) = \gamma_\pm(t)\\
u_{\gamma^-} \# u  \# -u_{\gamma^+} = A
\end{array} 
\right\}.
$$
There exists a second category subset of regular paths $\jreg^I \subset \mathcal{J}^I$ for which the set of Floer strips are manifolds of dimension $|\tilde{\gamma}_-| - |\tilde{\gamma}_+| + \mu(A)$.  Recall that $\R$ acts freely on the space of non-constant strips, hence the quotient space is also a manifold, of dimension smaller by one.

We further restrict $\jreg^I$ to paths which are included in a chamber of $\jreg$ and pick such a path $\mathbf{J}$.  The Floer complex is the graded $\Lambda$-module defined by
$$CF(L; H, \mathbf{J}) := \text{span}_\Lambda \langle\tilde{\gamma} \; | \; \gamma \in \mathcal{O}(H)\rangle$$
and the differential is given on generators by
$$\partial\co CF_*(L; H, \mathbf{J}) \to CF_{*-1}(L; H, \mathbf{J})$$
$$\tilde{\gamma}_- \mapsto \sum_{\substack{\gamma_+, A\\ |\tilde{\gamma}_-| - |\tilde{\gamma}_+| + \mu(A) = 1}} \#_2 (\mathcal{M}(\tilde{\gamma}_-, \tilde{\gamma}_+, A; \mathbf{J}, H)/ \R) T^{\omega(A)}q^{\mu(A)}\tilde{\gamma}_+.$$
This sum is well-defined by our choice of $\mathbf{J}$ and by Gromov compactness.  Moreover, $\partial^2 = 0$ follows by standard arguments;  one basically mimics the proofs by Oh \cite{Oh:HF1,Oh:HF1-add}.  Of course we also need that for every $\gamma \in \mathcal{O}(H)$ and $A \in \mathbb{R}$, the algebraic numbers of $J_i$-holomorphic discs of Maslov class two and total area at most $A$ going through $\gamma(i)$ are equal, for $i=0, 1$.  This holds by our choice of $\mathbf{J} \subset \jreg$.

As with the pearl complex, if one restricts to a fixed connected component of $\jreg$, then the Floer homology is canonically independent of all choices.

The module structure is defined as in Charette \cite{Cha:refinement}:
$$\star\co \mathcal{C}_p(f, J_0) \otimes_\Lambda CF_q(L, H, \mathbf{J}) \to CF_{p+q-2}(L, H, \mathbf{J})$$
$$x \otimes \tilde{\gamma}_- \mapsto \sum_{\tilde{\gamma}_+, \; A \in H_2(M, L)} \#_2 \mathcal{M}(x, \tilde{\gamma}_-, \tilde{\gamma}_+, A) T^{\omega(A)} q^{\mu(A)} \tilde{\gamma}_+,$$
where $\mathcal{M}(x, \tilde{\gamma}_-, \tilde{\gamma}_+, A)$ is the space of pearls leaving $x$ and entering a Floer strip $u$ (at the point $u(0,0)$) connecting $\tilde{\gamma}_-$ to $\tilde{\gamma}_+$.  The homology class is $A = \sum_i [v_i] + [u_{\gamma_-} \# u \#  -u_{\gamma_+}]$, where $v_i$ are the discs appearing in the pearl from $x$ to $u$.  This gives $HF(L, \mathbf{J})$ a structure of left $QH(L, [J_0])$-module.

\subsubsection{The PSS isomorphism}\label{sec:PSS}
We quickly recall here the remaining structures mentioned in Proposition \ref{thm:mainstructures}; see \cite{Bi-Co:qrel-long, Cha:refinement} for more details and proofs.

The PSS-morphism
$$\mathcal{C}_*(f, J) \to CF_*(L, H, \mathbf{J})$$
is defined by counting pearls leaving from a critical point where the last disc in the pearl is a half-disc $u$ satisfying the PSS-equation and converging to an orbit $\tilde{\gamma}_+$.  We count these pearls in each homology class $A = \sum v_i + [u \# -u_{\gamma_+}]$.  The PSS-equation is
$$\partial_s u + J(s,t,u(s,t)) ( \partial_t u - \beta(s) X_{H_t}(u)) = 0,$$
where $\beta\co \R \to [0,1]$ is a monotone surjective function which is constant close to $\pm \infty$. An easy computation yields the following energy estimate, which is needed to prove Theorem \ref{thm:uniruling}:
$$E(u):= \int \omega(\partial_s u, J(s,t,u) \partial_s u) dsdt \leq \omega(u) - \int H_t(\gamma_+(t))dt + \int \max_M H_t dt.$$

This is a chain-map, by arguments similar to the ones considered in the previous sections, as long as $J(s,t) \subset \jreg$ for every $(s,t)$ and the family $J(s,t)$ is generic with $J(-\infty, 0) = J$.  It is standard to show that such choices are always possible.

There is a similar PSS$^{-1}$-morphism defined using pearls starting from an orbit $\tilde{\gamma}_-$ and going into a critical point using a half-disc $u$ satisfying the PSS$^{-1}$-equation
$$\partial_s u + J(s,t, u(s,t)) ( \partial_t u - \beta(-s) X_{H_t}(u)) = 0.$$
We count each homology class $A = \sum_i [v_i] + [u_{\gamma_-} \# u]$.  The energy of such half-discs verifies
$$
E(u) \leq \omega(u) + \int H_t(\gamma_-(t))dt - \int \min_M H_t dt.
$$

Recall that a chain-homotopy satisfying $d_{\text{pearl}} \psi - \psi d_{\text{pearl}} = \id - \text{PSS}^{-1} \circ \text{PSS}$ is defined by using pearls for which one of the discs $u$ solves the equation
$$\partial_s u + J(s,t, u(s,t))(\partial_t u + \alpha_R(s) X_{H_t}) = 0.$$
Here $R \in (0, \infty)$ depends on $u$, $\alpha_R\co \R \to [0,1]$ is smooth and, when $R \geq 1$, we set
$$
\alpha_R(s) = 
\begin{cases}
1 \text{ if } s \in [-R, R], \; \\
0 \text{ if } |s| \geq R + 1
\end{cases}.
$$
We require $|\alpha'_R| \leq 1$ and set $\alpha_R = R \alpha_1$ when $R \leq 1$.  The energy bound for such discs is given by
\begin{align}\label{eq:energypsshomotopy}
E(u) \leq \omega(u) + \int (\max_M H_t - \min_M H_t) dt.
\end{align}

Finally, there is a chain homotopy
$$\eta\co (\mathcal{C}(f, J) \otimes_\Lambda \mathcal{C}(g, J))_* \to CF_{*-1}(L, H, \mathbf{J})$$
satisfying 
$$
\text{PSS}(x \circ y) = x \star \text{PSS}(y) + (\partial \eta - \eta \partial)(x \otimes y),
$$
thus the PSS is a $QH(L)$-module isomorphism.
% !TEX root = lagsurf.tex
\section{A uniruling result and the proof of Theorem \ref{thm:main}}\label{sec:proof}
All the structures defined in \S\ref{sec:structures} as well as the energy estimates given there yield a proof of the following uniruling result, which can be found in \cite{Cha:refinement}; the modifications needed using the Novikov ring $\Lambda$ are direct:

\begin{thm}\label{thm:uniruling}
Let $L$ be an orientable Lagrangian surface and $H$ a non-constant Hamiltonian.  Then for every $J_0 \in \jreg$, $\mathbf{J} \subset \jreg$ starting at $J_0$ and every $x_0 \in L \backslash (\phi_1^H)^{-1}(L)$, there exists a non-constant map $u$ that is either a Floer $\mathbf{J}$-strip corresponding to $H$,  a $J_t$-holomorphic sphere or a $J_0$-holomorphic disc with boundary on $L$, such that $x_0 \in \text{Im}(u)$ and $0 < E(u) \leq \int (\max H_t -\min H_t)$.  If $L$ and $(\phi_1^H)^{-1}(L)$ intersect transversally, then $\mu(u) \leq 2$.
\end{thm}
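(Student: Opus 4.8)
The plan is to run the Piunikhin--Salamon--Schwarz machinery built in \S\ref{sec:structures} as a geometric detection argument anchored at the point $x_0$. First I would choose a Morse--Smale pair $(f,\rho)$ on $L$ whose unique minimum sits exactly at $x_0$, together with $J_0$ and a path $\mathbf{J}\subset\jreg$ as in the constructions of \S\ref{sec:structures}. The decisive feature of this choice is that the unstable manifold of the minimum is the single point $\{x_0\}$, so that \emph{every} pearly or PSS configuration emanating from the minimum is forced to have its initial marked point at $x_0$; this is how the localization to $x_0$ enters.

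The anchor for the argument is the chain homotopy realizing $\mathrm{PSS}^{-1}\circ\mathrm{PSS}\simeq\id$ through the family of equations with cutoff $\alpha_R$, $R\in(0,\infty)$. I would assemble the one--dimensional parametrized moduli space of $\alpha_R$--configurations emanating from the minimum $x_0$, cut down to dimension one by the grading. Its $R\to 0$ end degenerates to genuine $J_0$--holomorphic data and contains the rigid constant solution at $x_0$, which accounts for the identity term in $\mathrm{PSS}^{-1}\circ\mathrm{PSS}=\id$ and provides a single distinguished boundary point. Since a compact one--manifold has an even number of boundary points, at least one further degeneration must occur. The possible degenerations are precisely of three kinds: a non--constant $J_0$--holomorphic disc at $R\to 0$, a Floer $\mathbf{J}$--strip at $R\to\infty$ (from the breaking $\mathrm{PSS}\,\#\,\text{strip}\,\#\,\mathrm{PSS}^{-1}$), or a $J_t$--holomorphic sphere or disc bubbling off at an interior value of $R$; each such object passes through $x_0$, and I would take $u$ to be the component whose image contains $x_0$.

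The linchpin that makes this work without monotonicity is Lemma~\ref{lem:index}: for $J\in\jreg$ every non--constant disc has $\mu\geq 2$ and every non--constant sphere has $c_1\geq 1$, and (as in Lemma~\ref{lem:nonreg}) the same positivity holds along a generic path, so no Maslov--zero disc can bubble. This is exactly what compactifies the parametrized moduli space with boundary strata of the three stated types \emph{only}, and it is the step I expect to be the crux, since it is precisely where orientability of the surface is indispensable --- Rizell's examples \cite{Riz:nonuniruled} show the conclusion fails otherwise. Positivity of energy is automatic from non--constancy, giving $E(u)>0$, while the bound $E(u)\leq\int(\max_M H_t-\min_M H_t)\,dt$ follows from the energy estimates of \S\ref{sec:structures}, in particular \eqref{eq:energypsshomotopy}: the total geometric energy of the anchored configuration is controlled by $\int(\max_M H_t-\min_M H_t)\,dt$, and any single component inherits this bound. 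The delicate point is the bookkeeping between geometric energy and symplectic area across the $\alpha_R$--family and the PSS breakings, which I would track carefully to absorb the $\omega$--terms at the two ends.

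Finally, for the index estimate I would invoke the transversality hypothesis $L\pitchfork(\phi_1^H)^{-1}(L)$, so that the Floer generators are honest transverse intersection points carrying rigid $\Z$--gradings. Then the total index transported along the one--dimensional family is fixed and small; combined with the lower bounds of Lemma~\ref{lem:index} ($\mu\geq 2$ per disc, $2c_1\geq 2$ per sphere), this forces exactly one non--constant component to survive and pins its index at $\mu(u)\leq 2$. The modifications relative to \cite{Cha:refinement} are confined to replacing the monotone Novikov coefficients by $\Lambda$ and are routine once Lemma~\ref{lem:index} supplies the required positivity.
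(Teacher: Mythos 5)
Your overall strategy --- detect the curve through $x_0$ by anchoring a PSS-type moduli space there, compactify it using Lemma~\ref{lem:index}, and extract the energy bound from the estimates of \S\ref{sec:PSS} --- is the right circle of ideas: the paper itself gives no self-contained proof but defers to \cite{Cha:refinement}, and the argument there is exactly of this PSS/energy-filtration type. Note, however, that the paper goes to the trouble of establishing the \emph{module} structure $\star$ and the homotopy $\eta$ with $\text{PSS}(x\circ y)=x\star\text{PSS}(y)+(\partial\eta-\eta\partial)(x\otimes y)$ precisely because the argument of \cite{Cha:refinement} runs through that identity (with $x$ the point class and $y$ the unit), and through the moduli spaces $\mathcal{M}(x,\tilde{\gamma}_-,\tilde{\gamma}_+,A)$ in which the pearl enters the Floer strip at the interior point $u(0,0)$; you use only the homotopy $\id\simeq\text{PSS}^{-1}\circ\text{PSS}$, which is not quite enough as written.

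The genuine gap is in the localization step. Choosing $f$ with its minimum at $x_0$ pins only the \emph{initial} marked point $u_1(-1)$ of the \emph{first} object of each configuration at $x_0$ (since $W^u(\min)=\{x_0\}$). It does not follow that "each such degeneration passes through $x_0$": a sphere or disc bubbling off at an interior value of $R$ attaches at an arbitrary point of the principal component and need not meet $x_0$; a Morse-theoretic breaking ($\partial\psi+\psi\partial$ terms) produces discs living on later pearls, away from $x_0$; and the $R\to\infty$ end breaks into $\text{PSS}\,\#\,(\text{strips})\,\#\,\text{PSS}^{-1}$, whose outer components are solutions of \emph{perturbed} equations and are none of the three admissible types. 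So "the one-manifold has a second boundary point" does not by itself produce a non-constant disc, sphere, or Floer strip \emph{through $x_0$}. To close this you need either (i) the contrapositive energy-filtration argument: assume no admissible $u$ with $0<E(u)\leq\int(\max H_t-\min H_t)$ passes through $x_0$, show that all moduli spaces anchored at $x_0$ and contributing to the coefficient of lowest order in $T$ are then compact, and contradict the chain-level identity (the term $x_0\cdot T^0q^0$ on the identity side must be cancelled by something); or (ii) the module-action picture, where the anchor sits at the interior point $u(0,0)$ of the strip, so that a trivial pearl forces the \emph{strip itself} through $x_0$ (non-constant because $x_0\notin(\phi_1^H)^{-1}(L)$ rules out the constant strip at a chord) and a non-trivial pearl forces its first $J_0$-disc through $x_0$. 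Your final paragraph on $\mu(u)\leq 2$ is fine once the detection step is repaired: the dimension constraint together with $\mu\geq 2$ per disc and $2c_1\geq 2$ per sphere from Lemma~\ref{lem:index} pins the index.
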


\noindent \textbf{Proof of Theorem \ref{thm:main}}.
Let $L$ be a closed, orientable and displaceable Lagrangian surface, $E(L)$ its displacement energy, and $H$ a Hamiltonian displacing $L$ chosen such that $\int (\max_M H_t -\min_M H_t)dt = E(L) + \epsilon$ .  Given a symplectic embedding $e\co (B^{4}(r), \R^2) \to (M, L)$, the previous theorem shows that a non-constant $J$-holomorphic sphere (the constant path $\mathbf{J} = J$ is generic since there are no strips) or $J$-holomorphic disc with boundary on $L$, of symplectic area at most $E(L) + \epsilon$ goes through $e(0)$, where $e^*J = J_0$; here $J_0$ denotes the standard complex structure on $\C^2$.  Standard arguments (see e.g. Barraud-Cornea \cite[Proof of Corollary 3.10]{Bar-Cor:Serre}) then yield $\pi r^2 /2 \leq E(L) + \epsilon$ for every $\epsilon$. 
\qed

%%%%%%%%%%%%%%%%%%%%   End of main body of article
%
%                             References
%
%   BiBTeX users uncomment the following line:
%
\bibliographystyle{gtart}
\bibliography{bibliography}
\end{document}